\newtheorem{theorem}{Theorem}
\newtheorem{definition}{Definition}
\newcommand{\C}{\mathbb{C}}
\newcommand{\R}{\mathbb{R}}
\def\Det{\mathrm{det}}
\newcommand*{\diff}[1]{\mathrm{d} #1}
\newcommand{\dt}{\mathrm{d}t}
\newcommand{\dx}{\mathrm{d}x}
\newcommand{\ler}[1]{\left( #1 \right)}
\newcommand{\lers}[1]{\left\{ #1 \right\}}
\newcommand{\abs}[1]{\left| #1 \right|}
\newcommand{\norm}[1]{\left|\left|#1\right|\right|}
\newcommand{\be}{\begin{equation}}
\newcommand{\ee}{\end{equation}}
\newcommand{\ba}{\begin{array}}
\newcommand{\ea}{\end{array}}
\newcommand{\bec}{\begin{equation*}}
\newcommand{\eec}{\end{equation*}}
\newcommand\bA{\mathbf A}
\newcommand\bw{\mathbf w}
\DeclareMathOperator*{\argmin}{arg\,min}
\newcommand{\fel}{\frac{1}{2}}
\newcommand{\cH}{\mathcal{H}}
\newcommand{\cB}{\mathcal{B}}
\newcommand{\cP}{\mathcal{P}}
\newcommand\cN{\mathcal N}
\newcommand{\cW}{\mathcal{W}}
\newcommand{\tr}{\mathrm{tr}}
\newcommand{\dd}{\mathrm{d}}
\newcommand{\cA}{\mathcal{A}}
\renewcommand\l{\lambda}
\newcommand{\D}{\mathbf{D}}
\newcommand{\ben}{\begin{enumerate}}
\newcommand{\een}{\end{enumerate}}
\newcommand{\dwts}[2]{d_{\cW_2}^2(#1,#2)}
\newcommand\bh{\cB(\cH)}
\begin{document}

\title{Operator means, barycenters, and fixed point equations}

\author[D\'aniel Virosztek]{D\'aniel Virosztek}
\address{D\'aniel Virosztek, HUN-REN Alfr\'ed R\'enyi Institute of Mathematics\\ Re\'altanoda u. 13-15.\\Budapest H-1053\\ Hungary}
\email{virosztek.daniel@renyi.hu}

\date{}

\subjclass[2020]{Primary: 47A64; Secondary: 47A60.}

\keywords{operator mean, barycenter, fixed point equation}

\thanks{D. Virosztek is supported by the Momentum Program of the Hungarian Academy of Sciences under grant agreement no. LP2021-15/2021, and partially supported by the ERC Synergy Grant No. 810115.}

\begin{abstract}
The seminal work of \emph{Kubo} and \emph{Ando} \cite{kubo-ando} provided us with an axiomatic approach to means of positive operators. As most of their axioms are algebraic in nature, this approach has a clear algebraic flavor. On the other hand, it is highly natural to take the geometric viewpoint and consider a distance (understood in a broad sense) on the cone of positive operators, and define the mean of positive operators by an appropriate notion of the center of mass. This strategy often leads to a fixed point equation that characterizes the mean. The aim of this survey is to highlight those cases where the algebraic and the geometric approaches meet each other. 
\end{abstract}

\maketitle

\tableofcontents

\section{An algebraic approach to operator menas} \label{sec:alg-approach}

Although the \emph{paralell sum} $A : B =\ler{A^{-1}+B^{-1}}^{-1}$ of positive definite matrice was considered already in 1969 by \emph{Anderson} and \emph{Duffin} \cite{anderson-duffin}, who used this operation to study electrical networks, and the \emph{geometric mean} $A \# B= A^{\fel}\ler{A^{-\fel} B A^{-\fel}}^{\fel} A^{\fel}=B^{\fel}\ler{B^{-\fel} A B^{-\fel}}^{\fel} B^{\fel}$ was introduced by \emph{Pusz} and \emph{Woronowicz} in 1975 \cite{pusz-woronowicz}, the first systematic study of means of positive operators is due to \emph{Kubo} and \emph{Ando} from 1980 \cite{kubo-ando}, who provided an axiomatic approach to operator means which proved extremely influential in the last decades. The binary operations characterized by their axioms are now called \emph{Kubo-Ando} connections and means, and the study of these means is flourishing research field within operator theory with intimate connections to quantum information theory. 
\par
This first section is very brief introduction to the Kubo-Ando theory which, however, aims to cover the most important concepts and phenomena. First, we need to fix some notation. Let $\cH$ be a complex Hilbert space, either finite or infinite dimensional, and let $\bh$ denote the set of all bounded linear operators on $\cH.$ Let $\bh^{sa}, \, \bh^{+},$ and $\bh^{++}$ stand for the set of all bounded self-adjoint, positive semidefinite, and positive definite (i.e., positive semidefinite and invertible) operators, respectively. The symbol $I$ stands for the identity of $\bh,$ and we consider the \emph{L\"owner order} induced by positivity on $\bh^{sa},$ that is, by $A \leq B$ we mean that $B-A$ is positive semidefinite, and $A<B$ means that $B-A$ is positive definite. The spectrum of $X \in \bh$ is denoted by $\mathrm{spec}(X).$ The symbols $\D$ and $\D^2$ denote the first and second Fr\'echet derivatives, respectively.

\begin{definition}[Operator connections and means]
A binary operation 
$$
\sigma: \bh^{+} \times \bh^{+} \rightarrow \bh^{+}; \, (A,B) \mapsto A \sigma B
$$
is called an \emph{operator connection} if it satisfies the following three properties:
\begin{itemize}
    \item (P1) Monotonocity in both variables: 
    $$
    \text{if } A \leq A' \text{ and } B \leq B' \text{ then } A \sigma B \leq A' \sigma B'
    $$
    \item (P2) Transformer inequality: 
    $$
    C \ler{A \sigma B} C \leq (C A C) \sigma (C B C) \text{ for all }  A,B,C \in \bh^{+} 
    $$
    \item (P3) Continuity for decreasing sequences: if $A_1\geq A_2 \geq A_3 \geq \dots$ and $A_n \rightarrow A$ in the strong operator topology, and similarly, $B_1\geq B_2 \geq B_3 \geq \dots$ and $B_n \rightarrow B$ strongly, then
    $$
    \ler{A_n} \sigma \ler{B_n} \rightarrow A \sigma B \text{ in the strong operator topology}
    $$
\end{itemize}
Furthermore, \emph{operator means} are those operator connections that satisfy the 
\begin{itemize} 
    \item (P4) Normalization condition: $I \sigma I=I$
\end{itemize}
\end{definition}
One of the most striking results of \cite{kubo-ando} is that there is one-by-one correspondence between operator connections and operator monotone functions mapping the positive half-line into itself. Namely, for every operator connection $\sigma$ there is a unique operator monotone function $f: (0, \infty) \rightarrow (0,\infty)$ such that 
\be \label{eq:conn-funct-corres}
A \sigma B = A^{\fel} f \ler{A^{-\fel} B A^{-\fel}} A^{\fel} \qquad (A, B \in \bh^{++}).
\ee
The $\sigma \leftrightarrow f$ correspondence described in \eqref{eq:conn-funct-corres} is in fact an affine order-isomorphism. It is instructive to take a look at those particular connections that have already been mentioned, and note that the parallel sum $A:B=\ler{A^{-1}+B^{-1}}^{-1}$ corresponds to the operator monotone function $f(x)=\frac{x}{x+1}$ while the geometric mean $A \# B=A^{\fel}\ler{A^{-\fel} B A^{-\fel}}^{\fel} A^{\fel}$ corresponds to $f(x)=\sqrt{x}.$
\par
Operator monotone functions mapping the positive half-line $(0, \infty)$ into itself admit a transparent integral-representation by L\"owner's theory.
In \cite{kubo-ando}, the following integral representation was considered:
\be \label{eq:int-rep-orig}
f(x)=\int_{[0, \infty]}\frac{x(1+t)}{x+t} \dd m(t) \qquad \ler{x>0},
\ee
where $m$ is a positive Radon measure on the extended half-line $[0, \infty].$ There is another integral-representation formula for operator monotone functions which may be even more convenient than \eqref{eq:int-rep-orig}. By a simple push-forward of $m$ by the transformation $T: [0, \infty] \rightarrow [0,1]; \, t \mapsto \l:=\frac{t}{t+1},$ we get the following integral-representation of positive operator monotone functions on $(0, \infty):$
\be \label{eq:int-rep-new}
f_{\mu}(x)=\int_{[0, 1]}\frac{x}{(1-\l)x+\l} \dd \mu(\l) \qquad \ler{x>0},
\ee
where $\mu=T_{\#} m,$ that is, $\mu(A)=m\ler{T^{-1}(A)}$ for every Borel set $A \subseteq [0,1].$ This representation is also well-known and appears --- among others --- in \cite{hansen-laa-13}.
Note that if $m$ is absolutely continuous with respect to the Lebesgue measure and $\dd m(t)=\rho(t) \dd t,$ then the density of $\mu=T_{\#} m$ is given by $\dd \mu(\l)=\frac{1}{(1-\l)^2}\rho\ler{\frac{\l}{1-\l}} \dd \l.$
\par
In particular, by \eqref{eq:conn-funct-corres} and \eqref{eq:int-rep-new}, there is an affine isomorphism between operator connections and positive Radon measures on $[0,1].$ We note that the normalization condition $I \sigma I=I$ is satisfied if and only if $\mu([0,1])=1.$
\par
We denote by $\cP\ler{[0,1]}$ the set of all Borel probability measures on $[0,1],$ and by $c\ler{\mu}:=\int_{[0,1]}\l \dd \mu (\l)$ the center of mass of $\mu.$ There is a natural way to assign a weight parameter to a mean $\sigma,$ namely, $W\ler{\sigma}:=f'(1)=c\ler{\mu},$ where $f$ generates $\sigma$ in the sense of \eqref{eq:conn-funct-corres} and $\mu$ generates $f$ in the sense of \eqref{eq:int-rep-new}. This weight parameter will play an essential role later when we turn to the discussion of \emph{generalized Hellinger distances}. Here we only mention that for the weighted arithmetic, geometric, and harmonic means generated by the functions
$$
a_\l(x)=(1-\l)+\l x, \, g_\l(x)=x^\l, \text{ and } h_\l(x)=\ler{(1-\l) +\l x^{-1}}^{-1},
$$
respectively, we have
$W\ler{\sigma_{a_\l}}=W\ler{\sigma_{g_\l}}=W\ler{\sigma_{h_\l}}=\l.$ That is, this weight parameter coincides with the usual one in the most important special cases.
\par
The \emph{convex order} is a well-known relation between probability measures; for $\mu, \nu \in \cP\ler{[0,1]},$ we say that $\mu \preccurlyeq \nu$ if for all convex functions $u: [0,1] \rightarrow \R$ we have $\int_{[0,1]} u \, \dd \mu \leq \int_{[0,1]} u \, \dd \nu.$ It is clear that for all $\mu \in \cP\ler{[0,1]}$ with $c\ler{\mu}=\l$ we have $\delta_\l \preccurlyeq \mu \preccurlyeq (1-\l)\delta_0 + \l \delta_1,$ where $\delta_x$ denotes the Dirac mass concentrated on $x.$ For any fixed $x>0,$ the map $\l \mapsto \frac{x}{(1-\l)x+\l}$ --- which is the core of the integral representation \eqref{eq:int-rep-new} --- is convex. Therefore, if $\mu \preccurlyeq \nu,$ then $f_\mu(x)\leq f_\nu(x)$ for all $x>0,$ and hence $A \sigma_{\mu} B \leq A \sigma_{\nu} B$ for all $A,B \in \bh^{++},$ where $f_\mu$ denotes the generating function corresponding to $\mu$ via \eqref{eq:int-rep-new} and $\sigma_\mu$ denotes the operator mean corresponding to $f_\mu$ via \eqref{eq:conn-funct-corres}. 
Consequently, if $\nu=\ler{1-c\ler{\mu}}\delta_0 + c\ler{\mu} \delta_1,$ then
$A \sigma_{\nu} B-A \sigma_{\mu} B$ is always positive, in particular, $\tr \ler{A \sigma_{\nu} B-A \sigma_{\mu} B}\geq 0.$ This quantity is exactly the generalized quantum Hellinger divergence we will discuss in Section \ref{susec:q-hell-div}.
\par
Let us turn to the description of two interesting involutive operations on operator connections. The \emph{adjoint} of a conncetion $\sigma$ is denoted by $\sigma^*$ and is defined by
\be \label{eq:conn-adjoint-def}
A \sigma^* B = \ler{A^{-1} \sigma B^{-1}}^{-1}
\ee
for $A,B \in \bh^{++}.$ If $f$ is the generating function of $\sigma$ in the sense of \eqref{eq:conn-funct-corres}, then $\sigma^*$ is generated by $f^*(x)=1/f(1/x).$ Note that taking the inverse on positive definite operators is an order-reversing operation, and hence $f^*$ is operator monotone whenever $f$ is so. Important examples of self-adjoint connections are the \emph{weighted geometric means} generated by the power functions $[0, \infty) \ni x \mapsto x^p$ for $0\leq p \leq 1.$ 
The \emph{transpose} of $\sigma$ is denoted by $\sigma^t$ and is defined by
\be \label{eq:conn-trans-def}
A \sigma^t B = B \sigma A \qquad (A, B \in \bh^{+}).
\ee
An important fact is that if $\sigma$ is represented by $f,$ then its transpose $\sigma^t$ is represented by $x f(1/x).$ Therefore, a connection $\sigma$ is symmetric if and only if its generator $f$ satisfies $f(x)=x f(1/x).$
\par
All in all, the Kubo-Ando theory is a beautiful and satisfactory theory of two-variable operator means. However, it leaves the problem multivariate operator means untouched. A clear advantage of the geometric approach to be presented in the next section is that it produces natural candidates for means of several positive operators. Apparently, a substantial part of the studies of Riemannian geometries on positive operators was motivated by the problem of finding appropriate multivariate counterparts of well-established bi-variate operator means. 
\section{A geometric approach to operator means} \label{sec:geom-approach}

The notion of barycenter --- or least squares mean --- plays a central role in averaging procedures related to various topics in mathematics and mathematical physics. Given a metric space $\ler{X, d}$ and an $m$-tuple $a_1, \dots, a_m$ in $X$ with positive weights $w_1, \dots, w_m$ such that $\sum_{j=1}^m w_j=1,$ the barycenter is defined to be
\be \label{eq:metric-barycenter-def}
\argmin_{x \in X}\sum_{j=1}^m w_j d^2\ler{a_j,x}.
\ee
The object defined above by \eqref{eq:metric-barycenter-def} plays an important role in various areas of mathematics, and hence can be found under various names: it is sometimes called \emph{Fr\'echet mean} or \emph{Karcher mean} or \emph{Cartan mean}. It is also called \emph{mean squared error estimator} for the following reason: imagine that we want to determine an unknown element of the metric space $\ler{X,d}$ and we can perform imperfect measurements many times. If we measure $a_j$ with relative frequency $w_j,$ then an attractive estimation of the unknown object is \eqref{eq:metric-barycenter-def} which minimizes the weighted mean squared error from the measured objects.
\par
In the sequel, we review two distinguished metrics on the cone of positive definite operators on finite dimensional Hilbert spaces, namely the \emph{Riemannian trace metric} and the \emph{Bures-Wasserstein metric.} The focus of these review will be on the barycenters determined by these metrics.

\subsection{The Riemannian trace metric} \label{susec:rtm}

The convex \emph{Boltzmann entropy} (or H-functional) of a random variable $X$ with probability density $\varrho$ is given by
\be \label{eq:entropy-def}
H(X)=\int_{\mathrm{supp}(X)} \varrho(x) \log \varrho(x) \diff{x}.
\ee
This is a particularly important functional; for instance, the heat equation
$$
\partial_t \varrho = \Delta \varrho
$$
can be seen as the gradient flow for the Boltzmann entropy as potential (or "energy") in the differential structure induced by optimal transportation \cite{JKO-98,JKO-97a,JKO-97b}.
\par
Let us restrict our attention to special random variables. A circularly-symmetric centered complex Gaussian distribution on $\C^N$ is completely described by its covariance matrix $\Sigma.$ The probability density of such a zero-mean complex Gaussian $Z \sim \cN_{\C}\ler{0, \Sigma}$ is given by
$$
f_{\cN_{\C}\ler{0, \Sigma}}(z)=\frac{\exp{\ler{-z^* \Sigma^{-1} z}}}{ \pi^N \Det{\Sigma}}.
$$
Consequently, the Boltzmann entropy of $Z \sim \cN_{\C}\ler{0, \Sigma}$ can be given in the following simple closed form:
\be \label{eq:entropy-of-normal}
H(Z)=-\tr \log \Sigma + C(N)
\ee
where $C(N)$ is an irrelevant constant depending only on the dimension $N.$ So the Boltzmann entropy is a smooth convex functional on the sub-manifold of centered, circularly-symmetric, non-degenerate complex Gaussians. We identify this sub-manifold with the cone of positive definite $N \times N$ matrices by the convenient identification of the random variable with its covariance matrix.
\par
Direct computations shows that the Hessian (that is, the second derivative) of the Boltzmann entropy \eqref{eq:entropy-of-normal} on these appropriate Gaussians is given by
\be \label{eq:ent-hess}
\D^2 H (A)[Y,X]= \tr A^{-1} Y A^{-1}X.
\ee
This is a collection of positive definite bilinear forms on the tangent spaces 
$$
T_A \ler{M_N^{++}(\C)} \simeq M_N^{sa}(\C)
$$
that depends smoothly on the foot point $A.$ Therefore, \eqref{eq:ent-hess} is a Riemannian metric tensor field. The global metric induced by this Riemannian tensor field
$$
g_A(X,Y):=\tr A^{-1} Y A^{-1}X
$$
is called the \emph{Riemannian trace metric (RTM)}.
\par
A particularly nice feature of the Riemannian trace metric is that the geodesic curve connecting the points $A,B \in \bh^{++}$ has the following simple closed form:
\be \label{eq:rtm-geod} 
\gamma_{A\rightarrow B}(t)=A^{\fel}\ler{A^{-\fel}BA^{-\fel}}^t A^{\fel}.
\ee
That is, the geodesic consists of the \emph{weighted geometric means} where the weight parameter $t$ runs from $0$ to $1.$ Consequently, the derivative of the geodesic reads as 
\be \label{eq:rtm-geod-velocity}
\gamma'_{A\rightarrow B}(t)=A^{\fel}\ler{A^{-\fel}BA^{-\fel}}^t \log \ler{A^{-\fel}BA^{-\fel}} A^{\fel},
\ee
and the RTM has the following simple closed form:
$$
d_{RTM}(A,B)
=\int_0^1\sqrt{g_{\gamma_{A\rightarrow B}(t)}\ler{\gamma'_{A\rightarrow B}(t),\gamma'_{A\rightarrow B}(t)}}\diff{t}
$$
\be \label{eq:rtm-closed-form}
=\int_0^1\sqrt{\tr \ler{\ler{\gamma_{A\rightarrow B}(t)}^{-1}\gamma'_{A\rightarrow B}(t)}^2}\diff{t}=\norm{\log \ler{A^{-\fel}B A^{-\fel}}}_{2}.
\ee
The notation $\norm{\cdot}_2$ stands here and throughout this paper for the Hilbert-Schmidt norm $\norm{X}_2:=\sqrt{\tr \ler{X^*X}}.$
\par
Therefore, the barycenter \eqref{eq:metric-barycenter-def} of the positive definite operators $A_1, \dots, A_m \in \bh^{++}$ with probability weights $w_1, \dots w_m$ is 
\be \label{eq:barycenter-rtm}
\argmin_{X \in \bh^{++}}\sum_{j=1}^m w_j d_{RTM}^2\ler{A_j,X}
=\argmin_{X \in \bh^{++}}\sum_{j=1}^m w_j \norm{\log \ler{X^{-\fel}A_j X^{-\fel}}}_{2}^2.
\ee
One can use Karcher's formula \cite[Theorem 2.1]{karcher} to compute the gradient of the objective function
\be \label{eq:rtm-bary-obj}
X \mapsto \sum_{j=1}^m w_j \norm{\log \ler{X^{-\fel}A_j X^{-\fel}}}_{2}^2
\ee
and deduce that the barycenter \eqref{eq:barycenter-rtm} --- which is more often called \emph{Karcher mean} in this context --- coincides with the unique positive definite solution of the \emph{Karcher equation} 
\be \label{eq:karcher}
\sum_{j=1}^m w_j \log \ler{X^{\fel} A_j^{-1} X^{\fel}}=0.
\ee
See \cite{moakher} and \cite{bhatia-pos-def-book} for alternative approaches on the derivation of the Karcher equation \eqref{eq:karcher}. 
\subsection{The Bures-Wasserstein metric} \label{susec:bures-wasserstein}
The {\it classical optimal transport (OT) problem} is to arrange the transportation of goods from producers to consumers in an optimal way, given the distribution of the sources and the needs (described by probability measures $\mu$ and $\nu$), and the cost $c(x,y)$ of transporting a unit of goods from $x$ to $y.$ Accordingly, a {\it transport plan} is modeled by a probability distribution $\pi$ on the product of the initial and the target spaces, where $\dd \pi(x,y)$ is the amount of goods to be transferred from $x$ to $y,$ and hence the marginals of $\pi$ are $\mu$ and $\nu.$ So the \emph{optimal transport cost} is the minimum of a convex optimization problem with linear loss function:
\be \label{eq:class-ot-cost-def}
\mathrm{Cost}\ler{\mu,\nu,c}= \min \lers{ \iint_{X \times Y} c(x,y) \dd \pi\ler{x,y} \, \middle| \, (\pi)_1=\mu, \, (\pi)_2=\nu}
\ee
where $(\pi)_i$ denotes the $i$th marginal of $\pi,$ and $X$ is the initial and $Y$ is the target space.
\par
OT costs \eqref{eq:class-ot-cost-def} give rise to OT distances (Wasserstein distances) on measures for certain cost functions $c(.,.)$. A prominent example is the \emph{quadratic Wasserstein distance} between probabilities on $\R^n$ having finite second moment defined by
\be \label{eq:2-wass-def-class}
\dwts{\mu}{\nu}= \inf \lers{ \iint_{\R^n \times \R^n} \abs{x-y}^2 \dd \pi\ler{x,y} \, \middle| \, (\pi)_1=\mu, \, (\pi)_2=\nu}.
\ee
The above definition \eqref{eq:2-wass-def-class} of the $2$-Wasserstein metric is \emph{static in nature}. It refers only to the initial and final distributions of the mass to be transported. The \emph{dynamical theory of mass transport} concerns on the contrary flows of measures connecting the initial and final states. The optimization problem is \emph{minimizing the total kinetic energy} needed to perform the transport. More precisely, the task is to minimize the kinetic energy over flows of measures $\ler{\rho_t}_{t=0}^T$ connecting the initial and final distributions and time-depending velocity fields $\ler{v_t}_{t=0}^T$ governing the flows --- we say that the velocity field $\ler{v_t}_{t=0}^T$ governs the flow $\ler{\rho_t}_{t=0}^T$ if they satisfy the linear transport equation (or continuity equation)
\be \label{eq:linear-transport}
\frac{\partial \rho_t}{ \partial t} +\nabla_x \cdot \ler{\rho_t v_t}=0.
\ee
Accordingly, the formula for the minimal kinetic energy (MKE) needed to transform $\mu$ to $\nu,$ if the total time allocated for the transport is $T,$ reads as follows:
$$
\mathrm{MKE}_T\ler{\mu, \nu}=
$$
\be \label{eq:mke}
=\inf \lers{ \int_0^T \int_{\R^n} \rho_t(x)\abs{v_t(x)}^2 \dx \, \dt \, \middle| \, \frac{\partial \rho_t}{ \partial t} +\nabla_x \cdot \ler{\rho_t v_t}=0, \, \rho_0=\mu, \rho_T=\nu}.
\ee

A seminal result of Benamou and Brenier \cite{Ben-Bren00} connects the static and the dynamic theory beautifully: the static $2$-Wasserstein distance \eqref{eq:2-wass-def-class} and the minimal kinetic energy needed to perform the dynamics are essentially the same.
More precisely, the Benamou-Brenier formula \cite{Ben-Bren00} tells us that the minimizing flow of measures in \eqref{eq:mke} is given by the \emph{displacement interpolation}, and
\be \label{eq:bb-form}
\mathrm{MKE}_T\ler{\mu, \nu}=\frac{1}{T} \dwts{\mu}{\nu}.
\ee
Now, if we take another look at \eqref{eq:mke}, keeping \eqref{eq:bb-form} in mind, we may observe that the $2$-Wasserstein distance $d_{\cW_2}$ is given by a formula that looks very much like a Riemannian geodesic formula. And indeed, there is a Riemannian metric tensor field on the space of probabilities that gives rise to the $2$-Wasserstein metric. The discussion of this Riemannian metric in general is beyond the scope of this survey --- we will be interested only in the special case of centered Gaussian measures. We only mention that this line of research was pioneered by Otto \cite{otto-cpde-2001} and we refer the interested reader to Subsection 8.1.2. of \cite{Villani1} for a detailed description of the theory. We must note, however, the groundbreaking discovery of Jordan, Kinderlehrer, and Otto \cite{JKO-97a,JKO-97b,JKO-98} who proved that the heat flow is the gradient flow of the Boltzmann entropy with respect to the $2$-Wasserstein Riemannian metric on probability densities.
\par
We shall restrict our attention to non-degenerate centered Gaussian measures on $\C^n$ that we identify with their non-singular covariance matrices. If $\mu$ is the law of the random variable $X \sim \cN_{\C}\ler{0, A}$ and $\nu$ is the law of $Y \sim \cN_{\C}\ler{0, B},$ then the quadratic Wasserstein distance between $\mu$ and $\nu$ admits the following closed form that refers only to the covariance matrices \cite{agueh-carlier, takatsu}:
\be \label{eq:W2-dist-fact}
d_{\cW_2}(\mu,\nu)=\ler{\tr A +\tr B - 2 \tr \ler{A^{\fel} B A^{\fel}}^{\fel}}^{\fel}.
\ee
The distance between the positive definite operators $A,B \in \bh^{++}$ acting on the Hilbert space $\cH=\C^n$ that appears on the right-hand side of \eqref{eq:W2-dist-fact} has a quantum information theoretic interpretation, as well. In that context, the name \emph{Bures distance} is used more frequently. We refer the reader to \cite{bhat-jain-lim} for a thorough study of this \emph{Bures-Wasserstein} distance given for $A, B \in \bh^{++}$ by
\be \label{eq:bw-dist-def}
d_{BW}^2(A,B)=\tr A +\tr B - 2 \tr \ler{A^{\fel} B A^{\fel}}^{\fel}.
\ee
We note furthermore that the isometries of the density spaces of $C^*$-algebras with respect to \eqref{eq:bw-dist-def} have been determined by Moln\'ar in \cite{molnar-bures}.
The geodesic line segment in the Bures-Wasserstein metric connecting $A$ with $B$ has the following simple closed form \cite{bhat-jain-lim, bhat-jain-lim-wass-mean}:
\be \label{eq:BW-geod-form}
A \lozenge_t B =(1-t)^2 A^2 + t^2 B^2+t(1-t)\ler{(AB)^{\fel}+(BA)^{\fel}}
\ee
where $t$ runs from $0$ to $1,$ and the square roots of the non-Hermitian operators are understood as follows: $(AB)^{\fel}=A^{\fel}\ler{A^{\fel}BA^{\fel}}^{\fel}A^{-\fel}$ and 
$$
(BA)^{\fel}=B^{\fel}\ler{B^{\fel} A B^{\fel}}^{\fel}B^{-\fel}=A^{-\fel}\ler{A^{\fel}BA^{\fel}}^{\fel}A^{\fel}.
$$
The elements of the geodesic segment \eqref{eq:BW-geod-form} are Bures-Wasserstein barycenters with appropriate weights \cite{bhat-jain-lim-wass-mean}, namely,
\be \label{eq:BW-geod-bary}
A \lozenge_t B= \argmin_{X \in \bh^{++}} (1-t) d_{BW}^2(A,X)+t d_{BW}^2(B,X) \qquad (A, B \in \bh^{++}).
\ee
An attractive feature of the Bures-Wasserstein barycenter is that it is characterized by a rather simple fixed point equation: the unique minimizer of the functional
\be \label{eq:BW-quad-loss-fn}
X \mapsto \sum_j w_j d_{BW}^2 (A_j,X)
\ee
on the positive definite cone coincides with the unique positive definite solution of the operator equation 
\be \label{eq:BW-fixed-point}
X=\sum_{j=1}^m w_j \ler{X^{\fel} A_j X^{\fel}}^{\fel}.
\ee
This striking result was first proved in \cite{agueh-carlier} and a very transparent presentation of the proof can be found, e.g., in \cite[Section 6]{bhat-jain-lim}.

\subsection{Barycenters for generalized quantum Hellinger distances} \label{susec:q-hell-div}

As one can see in \eqref{eq:bw-dist-def}, the Bures-Wasserstein distance is the square root of the distance between the trace of the arithmetic mean of $A$ and $B$ and the trace of a certain geometric mean of the same operators. Therefore it is a non-commutative version of the \emph{Hellinger distance} of probability vectors defined by 
\be \label{eq:hell-dist-classic}
d_H^2\ler{(p_1, \dots, p_n), (q_1, \dots q_n)}=\sum_{j=1}^n \ler{\sqrt{p_j}-\sqrt{q_j}}^2.
\ee
A thorough study of a variety of quantum Hellinger distances is presented in \cite{bhatia-paper}. Somewhat later, \emph{generalized} quantum Hellinger distances were introduced and studied with a strong emphasis on the characterization of the barycenter \cite{pv-hellinger}. Very recently, a one-parameter family of distances including the Bures-Wasserstein distance and a certain Hellinger distance was proposed and studied in \cite{komalovics-molnar}.
\par
Given a Borel probability measure $\mu \in \cP([0,1]),$ the corresponding \emph{generalized quantum Hellinger divergence} is given by
\be \label{eq:q-hell-div-def-1}
\phi_\mu(A,B):=\tr \ler{\ler{1-c\ler{\mu}} A + c\ler{\mu} B - A \sigma_{f_\mu} B} \qquad \ler{A,B \in \bh^{++}},
\ee
where $c(\mu)=\int_{{0,1}} \lambda \dd \mu(\lambda)$ is the center of mass of $\mu,$ and $\sigma_{f_\mu}$ is the Kubo-Ando mean generated by the operator monotone function $f_\mu$ in the sense of \eqref{eq:conn-funct-corres} and $f_\mu$ is determined by $\mu$ in the sense of \eqref{eq:int-rep-new}. We believe that the characterization of the barycenter of finitely many positive operators by a fixed point equation is instructive, especially as we did not present the proof of the analogous result for the Bures-Wasserstein metric. The following result and its proof appeared originally in \cite{pv-hellinger}.

\begin{theorem} \label{thm:gen-hellinger-barycenter}
Let $\mu \in \cP{[0,1]}$ and let $\phi_\mu$ be the generalized quantum Hellinger divergence generated by $\mu$ given in \eqref{eq:q-hell-div-def-1}.
The barycenter of the positive definite operators $A_1, \dots, A_m$ with positive weights $w_1, \dots, w_m$ with respect to $\phi_\mu,$ i.e.,
\be \label{eq:opt-prob}
\argmin_{X \in \bh^{++}} \sum_{j=1}^m w_j \phi_\mu\ler{A_j, X}
\ee
coincides with the unique positive definite solution of the fixed point equation
\be \label{eq:barycenter-char}
X= \frac{1}{c\ler{\mu}}
\sum_{j=1}^m w_j \int_{[0,1]} \l \abs{(1-\l) A_j^{-1} X^{\fel}+ \l X^{-\fel}}^{-2} \dd \mu (\l)
\ee
where $|\cdot|$ stands for the absolute value of an operator, that is, $\abs{Z}=\ler{Z^*Z}^{\fel}.$
\end{theorem}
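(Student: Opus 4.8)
The plan is to compute the Fr\'echet derivative of the objective function $F(X) := \sum_{j=1}^m w_j \phi_\mu(A_j, X)$ on the open cone $\bh^{++}$, set it to zero, and show that the resulting stationarity condition is equivalent to \eqref{eq:barycenter-char}. First I would rewrite $\phi_\mu(A_j, X)$ using the integral representation \eqref{eq:int-rep-new}: since $A \sigma_{f_\mu} B = A^{\fel} f_\mu(A^{-\fel} B A^{-\fel}) A^{\fel}$ and $f_\mu(x) = \int_{[0,1]} \frac{x}{(1-\l)x+\l} \dd\mu(\l)$, a short computation (using that $A^{\fel} (A^{-\fel} B A^{-\fel}) \big( (1-\l)(A^{-\fel} B A^{-\fel}) + \l I\big)^{-1} A^{\fel}$ equals the weighted harmonic mean $A :_\l B$, i.e. the parallel-type expression $\big((1-\l) A^{-1} + \l B^{-1}\big)^{-1}$ up to the right normalization) gives
\be
\phi_\mu(A,B) = \tr\!\left( (1-c(\mu)) A + c(\mu) B - \int_{[0,1]} \big((1-\l)A^{-1} + \l B^{-1}\big)^{-1} \dd\mu(\l)\right).
\ee
This reduces everything to differentiating, in $X$, the trace of $\big((1-\l)A_j^{-1} + \l X^{-1}\big)^{-1}$, which is a smooth function of $X \in \bh^{++}$ for each fixed $\l \in [0,1]$.

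Next I would carry out this differentiation. Writing $R_{j,\l}(X) := \big((1-\l)A_j^{-1} + \l X^{-1}\big)^{-1}$, the chain rule gives $\D R_{j,\l}(X)[Y] = \l\, R_{j,\l}(X)\, X^{-1} Y X^{-1} R_{j,\l}(X)$, using $\D(X^{-1})[Y] = -X^{-1}YX^{-1}$ and $\D(Z^{-1})[W] = -Z^{-1}WZ^{-1}$. Taking the trace and using cyclicity, $\D\big(\tr R_{j,\l}(X)\big)[Y] = \l\, \tr\!\big( X^{-1} R_{j,\l}(X)^2 X^{-1} Y \big)$. Meanwhile $\D\big(\tr(c(\mu) X)\big)[Y] = c(\mu)\tr(Y)$. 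Hence the gradient of $F$ (with respect to the trace inner product) is
\be
\nabla F(X) = c(\mu) I - \sum_{j=1}^m w_j \int_{[0,1]} \l\, X^{-1} R_{j,\l}(X)^2 X^{-1} \dd\mu(\l),
\ee
so the critical-point equation $\nabla F(X) = 0$ reads $c(\mu) X^2 = \sum_j w_j \int \l\, R_{j,\l}(X)^2 \dd\mu(\l)$ after conjugating by $X^{\fel}$ on both sides — and one checks that $X^{-\fel} R_{j,\l}(X)^2 X^{-\fel}$ is exactly $\big| (1-\l)A_j^{-1} X^{\fel} + \l X^{-\fel}\big|^{-2}$, since $R_{j,\l}(X)^{-1} = (1-\l)A_j^{-1} + \l X^{-1}$ and conjugating its square root appropriately produces the operator $(1-\l)A_j^{-1}X^{\fel} + \l X^{-\fel}$ whose modulus-squared inverse appears in \eqref{eq:barycenter-char}. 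Rearranging yields \eqref{eq:barycenter-char} verbatim.

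It remains to argue that the critical point exists, is unique, and is a genuine minimizer. For this I would invoke strict convexity: each map $B \mapsto \tr\big((1-\l)A^{-1} + \l B^{-1}\big)^{-1}$ is operator concave in $B$ (the parallel sum / harmonic mean is jointly concave), so its negative is convex, and adding the linear term $c(\mu)\tr(X)$ keeps $F$ convex; strict convexity on $\bh^{++}$ follows from the strict operator concavity of $B \mapsto \big((1-\l)A^{-1} + \l B^{-1}\big)^{-1}$ for $\l \in (0,1]$ (and $\mu$ is not the Dirac mass at $0$, since $c(\mu) > 0$ as the weights and the measure are genuinely there — one should note $c(\mu)>0$ is implicit). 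Coercivity as $X \to \partial \bh^{++}$ or $\|X\| \to \infty$ comes from the linear term $c(\mu)\tr(X)$ dominating (the subtracted harmonic-mean term is bounded above by $c(\mu)\tr(X)$ by the convex-order argument already given in the excerpt, so $F(X) \ge 0$ and $F(X) \to \infty$ as $\tr X \to \infty$; near the boundary one uses that the harmonic mean term stays finite while... actually the subtracted term can vanish, so coercivity at the boundary needs a little care — this is the one place to be slightly careful). Strict convexity then gives at most one critical point, existence of a minimizer follows from coercivity plus continuity, and the minimizer is an interior critical point, hence the unique solution of \eqref{eq:barycenter-char}.

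The main obstacle I anticipate is not the differentiation — that is routine — but rather the clean identification of $X^{-\fel} R_{j,\l}(X)^2 X^{-\fel}$ with $\big|(1-\l)A_j^{-1}X^{\fel} + \l X^{-\fel}\big|^{-2}$, i.e. matching the symmetrized form in the gradient with the particular non-symmetric operator $Z = (1-\l)A_j^{-1}X^{\fel} + \l X^{-\fel}$ whose $|Z|^{-2} = (Z^*Z)^{-1}$ appears in the statement. One has $Z^* Z = X^{\fel}\big((1-\l)A_j^{-1} + \l X^{-1}\big) X \big((1-\l)A_j^{-1} + \l X^{-1}\big)X^{\fel} = X^{\fel} R_{j,\l}(X)^{-1} X R_{j,\l}(X)^{-1} X^{\fel}$, so $(Z^*Z)^{-1} = X^{-\fel} R_{j,\l}(X) X^{-1} R_{j,\l}(X) X^{-\fel}$ — and this is exactly the symmetric expression appearing in $\nabla F(X)$ after the conjugation by $X^{\fel}$, confirming the match. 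The secondary obstacle is the boundary behaviour in the coercivity argument, which one handles by noting that $\phi_\mu(A_j, X) \ge 0$ always and that $\sum_j w_j \phi_\mu(A_j, X) \to \infty$ whenever $X$ degenerates, using lower semicontinuity of $X \mapsto -\tr R_{j,\l}(X)$ extended to the boundary together with the linear term.
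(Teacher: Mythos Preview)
Your approach is essentially the paper's: compute the Fr\'echet derivative of the objective, set it to zero, and invoke strict convexity of $X\mapsto\phi_\mu(A,X)$ for uniqueness. The only cosmetic difference is that the paper differentiates $A^{\fel}f_\mu(A^{-\fel}XA^{-\fel})A^{\fel}$ directly via the integral representation of $f_\mu$, arriving at the intermediate equation $c(\mu)I=\sum_j w_j\int\l\,\abs{(1-\l)A_j^{-1}X+\l I}^{-2}\dd\mu(\l)$ and then multiplying by $X^{\fel}$ on each side, whereas you first rewrite $A\sigma_{f_\mu}X=\int((1-\l)A^{-1}+\l X^{-1})^{-1}\dd\mu(\l)$ and differentiate that --- both give the same critical-point equation. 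One slip in your verification paragraph: since $Z=(1-\l)A_j^{-1}X^{\fel}+\l X^{-\fel}=R_{j,\l}(X)^{-1}X^{\fel}$, you have $Z^*Z=X^{\fel}R_{j,\l}(X)^{-2}X^{\fel}$ (no extra $X$ in the middle), so $\abs{Z}^{-2}=X^{-\fel}R_{j,\l}(X)^{2}X^{-\fel}$, which is exactly what your gradient equation yields after conjugating by $X^{\fel}$; your written formula $(Z^*Z)^{-1}=X^{-\fel}R_{j,\l}X^{-1}R_{j,\l}X^{-\fel}$ is incorrect and would not match. Your coercivity/existence discussion goes beyond the paper, which simply asserts existence of the minimizer from strict convexity without addressing boundary behaviour.
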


\begin{proof}
Assume that the positive definite operators $A_1, \dots, A_m$ and the weights $w_1, \dots w_m$ are given. By the strict concavity of $f_\mu,$ the function
$$
X \mapsto \phi_\mu \ler{A,X}=\tr \ler{\ler{1-c\ler{\mu}} A + c\ler{\mu} X - A^{\fel}f_\mu\ler{A^{-\fel} X A^{-\fel}}A^{\fel}}
$$
is strictly convex on $\bh^{++},$ see, e.g., \cite[2.10. Thm.]{carlen}.
Therefore, there is a unique solution $X_0$ of \eqref{eq:opt-prob}, and it is necessarily a critical point of the function $X \mapsto \sum_{j=1}^m w_j \phi_\mu\ler{A_j, X}.$ That is, it satisfies
\be \label{eq:diff-vanishes}
\D \ler{\sum_{j=1}^m w_j \phi_\mu\ler{A_j, \cdot}}(X_0)[Y]=0 \qquad \ler{Y \in \bh^{sa}}.
\ee

Easy computations give that
\be \label{eq:diff-transformed}
\D \ler{\sum_{j=1}^m w_j \phi_\mu\ler{A_j, \cdot}}(X)[Y]= c\ler{\mu} \tr Y -\sum_{j=1}^m w_j \tr \D F_{\mu, A_j}(X)[Y],
\ee
where for a positive definite operator $A,$ the map $F_{\mu,A}: \bh^{++} \rightarrow \bh^{++}$ is defined by
\be \label{eg:f-a-def}
F_{\mu,A}(X):=A \sigma_{f_\mu} X = A^{\fel}f_\mu\ler{A^{-\fel} X A^{-\fel}}A^{\fel}.
\ee

By differentiating \eqref{eq:int-rep-new}, we have
\be \label{eq:div-x-t-rep}
\D f_\mu(X) [Y]=\int_{[0,1]} \l \ler{(1-\l) X + \l I}^{-1} Y \ler{(1-\l) X + \l I}^{-1} \dd \mu (\l)
\ee
for $X \in \bh^{++}, \, Y \in \bh^{sa}.$
Consequently,
$$
\D F_{\mu, A_j}(X)[Y]
$$
$$
=\int_{[0,1]} \l A_j^\fel \ler{(1-\l) A_j^{-\fel} X A_j^{-\fel}+\l I }^{-1} A_j^{-\fel} Y^{\fel} \times 
$$
$$
\times Y^{\fel} A_j^{-\fel} \ler{(1-\l) A_j^{-\fel} X A_j^{-\fel}+\l I }^{-1} A_j^\fel \dd \mu (\l)=
$$
\be \label{eq:diff-concrete-form}
=\int_{[0,1]} \l \ler{(1-\l)X A_j^{-1}+ \l I}^{-1} Y \ler{(1-\l) A_j^{-1} X+ \l I}^{-1}  \dd \mu (\l).
\ee
By the linearity and the cyclic property of the trace, we get from \eqref{eq:diff-transformed} and \eqref{eq:diff-concrete-form} that \eqref{eq:diff-vanishes} is equivalent to
\be \label{eq:}
\tr \left[Y \ler{c\ler{\mu} I-\sum_{j=1}^m w_j \int_{[0,1]} \l \abs{(1-\l) A_j^{-1} X+ \l I}^{-2} \dd \mu (\l)}\right] = 0 \qquad \ler{Y \in \bh^{sa}}.
\ee 
This latter equation amounts to 
\be \label{eq:barycenter-necessary}
c\ler{\mu} I= \sum_{j=1}^m w_j \int_{[0,1]} \l \abs{(1-\l) A_j^{-1} X+ \l I}^{-2} \dd \mu (\l).
\ee
Multiplying by $\frac{1}{\sqrt{c(\mu)}}X^{1/2}$ from both left and right gives the desired operator equation \eqref{eq:barycenter-char}.
\end{proof}

\section{Connections between the algebraic and the geometric approaches} \label{sec:alg-geom-conn}

This section is devoted to the phenomenon when the algebraic and the geometric approach to operator means meet each other, that is, when Kubo-Ando means admit barycentric interpretation.

It is well known that special Kubo-Ando operator means, namely, the arithmetic and the geometric means admit divergence center interpretations. The arithmetic mean
$A \nabla B=(A+B)/2$ is clearly the barycenter for the Euclidean metric on positive operators:
$$
A \nabla B= \argmin_{X>0} \fel \ler{\tr (A-X)^2+\tr (B-X)^2}.
$$
A much more interesting fact is that the geometric mean $A\#B$ is the barycenter for the Riemannian trace metric $d_{RTM}(X,Y)=\norm{\log{\ler{X^{-\fel}Y X^{-\fel}}}}_2,$ that is,
\be \label{eq:geom-mean-rep}
A \# B= \argmin_{X>0} \fel \ler{d_{RTM}^2(A,X)+d_{RTM}^2(B,X)}.
\ee
The barycentric representation \eqref{eq:geom-mean-rep} of the bivariate geometric mean opened the gate for the \emph{definition} of the multivariate geometric mean as the barycenter with respect to the Riemannian trace metric. This definition was introduced by Moakher \cite{moakher} and Bhatia-Holbrook \cite{bhat-holb}. A recent result in \cite{pv-ka-bary} tells us that \emph{every} symmetric Kubo-Ando means admits a divergence center interpretation. Now we turn to the detailed explanation of this latter result, and we follow the presentation of \cite{pv-ka-bary}.
\par
Let $\sigma: \, \bh^{++} \times \bh^{++} \rightarrow \bh^{++}$ be a symmetric Kubo-Ando operator mean, and let $f_\sigma: \, (0, \infty) \rightarrow (0,\infty)$ be the operator monotone function representing $\sigma$ in the sense that
\be \label{eq:f-sig-rep}
A \sigma B= A^{\fel}f_\sigma \ler{A^{-\fel} B A^{-\fel}} A^{\fel}.
\ee
Clearly, $f_\sigma(1)=1,$ and the symmetry of $\sigma$ implies that $f_\sigma(x)=x f_\sigma\ler{\frac{1}{x}}$ for $x>0,$ and hence $f_\sigma'(1)=1/2$. 
We define
$$
g_\sigma: \, (0, \infty) \supseteq \mathrm{ran}\ler{f_\sigma} \rightarrow [0, \infty)
$$
by
\be\label{eq:g}
g_\sigma(x):=\int_1^x\left(1-\frac{1}{f_\sigma^{-1}(t)}\right)\dd t.
\ee
Obviously, $g_\sigma(1)=0$, $g_\sigma'(x)=1-\frac{1}{f_\sigma^{-1}(x)},$ and $g_\sigma'(1)=0$ as $f_\sigma(1)=1$. Since $f_\sigma$ is strictly monotone increasing, so is $g_\sigma'$, and hence $g_\sigma$ is strictly convex on its domain. 
Now we define the quantity
\be \label{eq:fi-sig}
\phi_\sigma (A,B):=\tr \ler{g_\sigma\left( A^{-1/2}BA^{-1/2}\right)},
\ee
for positive definite operators $A,B\in\cB(\cH)^{++}$ such that the spectrum of $A^{-1/2}BA^{-1/2}$ is contained in $\mathrm{ran}\ler{f_\sigma}.$ We define $\phi_\sigma(A,B):= +\infty$ if $\mathrm{spec} \ler{A^{-1/2}BA^{-1/2}} \nsubseteq \mathrm{ran}\ler{f_\sigma}.$
It will be important in the sequel that by \cite[2.10. Thm.]{carlen} the strict convexity of $g_\sigma$ implies that $X \mapsto \phi_\sigma(A,X)$ is strictly convex (whenever finite) for every $\sigma$ and $A.$ Now we are in the position to formalize the divergence center interpretation of symmetric Kubo-Ando means. The precise statement reads as follows.

\begin{theorem} \label{thm:bary}
For any $A,B\in\cB(\cH)^{++},$
\be \label{eq:bary}
\argmin_{X\in\cB(\cH)^{++}}\frac{1}{2}\left(\phi_\sigma (A,X)+\phi_\sigma(B,X)\right)=A\sigma B.
\ee
That is, $A\sigma B$ is a unique minimizer of the function $X\mapsto \frac{1}{2}\left(\phi_\sigma (A,X)+\phi_\sigma(B,X)\right)$ on $\cB(\cH)^{++}$.
\end{theorem}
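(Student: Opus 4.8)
The plan is to exploit the strict convexity of $X \mapsto \frac{1}{2}(\phi_\sigma(A,X) + \phi_\sigma(B,X))$ --- which is already noted in the text via \cite[2.10.\ Thm.]{carlen} --- so that it suffices to check that $A\sigma B$ is a critical point: if the function is strictly convex and $A\sigma B$ is a stationary point, it is the unique minimizer. So the whole argument reduces to a derivative computation. First I would fix directions: for $X \in \cB(\cH)^{++}$ and $Y \in \cB(\cH)^{sa}$, I compute $\D(X \mapsto \phi_\sigma(A,X))(X)[Y]$. Writing $\phi_\sigma(A,X) = \tr\, g_\sigma(A^{-1/2} X A^{-1/2})$, the chain rule for the trace of a scalar function of a self-adjoint operator gives $\D(\tr g_\sigma(\,\cdot\,))(Z)[W] = \tr(g_\sigma'(Z) W)$, so with $Z = A^{-1/2} X A^{-1/2}$ and $W = A^{-1/2} Y A^{-1/2}$ we get
\[
\D\big(\phi_\sigma(A,\cdot)\big)(X)[Y] = \tr\Big( g_\sigma'\big(A^{-1/2} X A^{-1/2}\big)\, A^{-1/2} Y A^{-1/2}\Big) = \tr\Big( A^{-1/2} g_\sigma'\big(A^{-1/2} X A^{-1/2}\big) A^{-1/2}\, Y\Big).
\]
Using $g_\sigma'(x) = 1 - 1/f_\sigma^{-1}(x)$, this is $\tr\big( (A^{-1} - A^{-1/2} f_\sigma^{-1}(A^{-1/2} X A^{-1/2})^{-1} A^{-1/2})\, Y\big)$.

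Next I would evaluate the sum of the two derivatives at $X = A\sigma B = A^{1/2} f_\sigma(A^{-1/2} B A^{-1/2}) A^{1/2}$. The key simplification is that $A^{-1/2}(A\sigma B)A^{-1/2} = f_\sigma(A^{-1/2} B A^{-1/2})$, so $f_\sigma^{-1}$ of it is simply $A^{-1/2} B A^{-1/2}$, and hence the "$A$-term" contributes
\[
\tr\Big( \big(A^{-1} - A^{-1/2} (A^{-1/2} B A^{-1/2})^{-1} A^{-1/2}\big) Y\Big) = \tr\big((A^{-1} - B^{-1}) Y\big).
\]
By the symmetry of $\sigma$ one has $A\sigma B = B\sigma A$, so the "$B$-term" contributes the same expression with $A$ and $B$ swapped, namely $\tr\big((B^{-1} - A^{-1}) Y\big)$. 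Adding the two and dividing by $2$ gives $0$ for every $Y \in \cB(\cH)^{sa}$; thus $A\sigma B$ is a critical point, and strict convexity upgrades this to the unique global minimizer on the cone. I should also remark that $\phi_\sigma(A, A\sigma B)$ and $\phi_\sigma(B, A\sigma B)$ are finite: indeed $\mathrm{spec}(A^{-1/2}(A\sigma B)A^{-1/2}) = \mathrm{spec}(f_\sigma(A^{-1/2}BA^{-1/2})) \subseteq \mathrm{ran}(f_\sigma)$, and symmetrically for the $B$-entry, so the candidate minimizer lies in the finiteness domain and the objective is not identically $+\infty$.

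The main obstacle is not any single step but rather the care needed around two points. First, the interchange of differentiation with the trace and with the integral/functional calculus: one must justify that $X \mapsto \tr g_\sigma(A^{-1/2} X A^{-1/2})$ is Fréchet differentiable on the open set where the spectrum lies in $\mathrm{ran}(f_\sigma)$, which follows from smoothness of $g_\sigma$ on the interior of its domain together with standard perturbation theory for functions of self-adjoint operators (the Daleckii--Krein formula), but deserves a sentence. Second, in the infinite-dimensional setting one must be slightly careful that the operators whose traces appear are genuinely trace class, or else restrict to $\cH$ finite-dimensional as the surrounding geometric discussion does; I would handle this by noting $g_\sigma'$ vanishes at $1$ together with $g_\sigma'(1)=0$, so $g_\sigma'(Z)$ is "small" near the identity, but the cleanest route is simply to observe that $\phi_\sigma$ is finite precisely when the relevant operator is trace class and to work on that domain. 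Modulo these standard justifications, the computation closes exactly because the symmetry $f_\sigma(x) = x f_\sigma(1/x)$ forces the two gradient contributions to be negatives of each other at $A\sigma B$.
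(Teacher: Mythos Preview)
Your proposal is correct and follows essentially the same approach as the paper: reduce to a critical-point check via strict convexity, compute the Fr\'echet derivative of $\phi_\sigma(A,\cdot)$ using the chain rule and $g_\sigma'(x)=1-1/f_\sigma^{-1}(x)$, evaluate at $X=A\sigma B$ to obtain $\tfrac{1}{2}\tr\bigl((A^{-1}-B^{-1})Y\bigr)$, and cancel against the $B$-term by the symmetry $A\sigma B=B\sigma A$. Your additional remarks on finiteness at the candidate minimizer and on the differentiability/trace-class justifications go slightly beyond what the paper spells out but are appropriate caveats rather than a different method.
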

\begin{proof}
By the strict convexity of $X\mapsto \frac{1}{2}\left(\phi_\sigma (A,X)+\phi_\sigma(B,X)\right)$ it is sufficient to show that $A \sigma B$ is a critical point, and therefore a unique minimizer. First we compute the derivative 
\bec
\left.\frac{\dd}{\dd t}\right|_{t=0}\phi_\sigma(A,X+tY)=\left.\frac{\dd}{\dd t}\right|_{t=0}\tr g_\sigma\left(A^{-1/2}XA^{-1/2}+tA^{-1/2}YA^{-1/2}\right)=
\eec
\be\label{eq:derivative}
\tr A^{-1/2}g_\sigma'\left(A^{-1/2}XA^{-1/2}\right)A^{-1/2}Y
\ee
for all $Y\in\cB(\cH)^{sa}$. Since $g_\sigma'(x)=1-(f_\sigma^{-1}(x))^{-1}$, we get
\be\label{eq:derivative2}
\left.\frac{\dd}{\dd t}\right|_{t=0}\frac{1}{2}\phi_\sigma(A,X+tY)=\frac{1}{2}\tr \left(A^{-1/2}\left(I-\left[f_\sigma^{-1}\left(A^{-1/2}XA^{-1/2}\right)\right]^{-1}\right)A^{-1/2}Y\right)
\ee
for all $Y\in\cB(\cH)^{sa}$. Substituting $X=A\sigma B=A^{1/2}f_\sigma(A^{-1/2}BA^{-1/2})A^{1/2}$ into the derivative above, the right hand side of (\ref{eq:derivative2}) becomes
\bec
\frac{1}{2}\tr \left(A^{-1/2}\left(I-\left[f_\sigma^{-1}\left(A^{-1/2}A^{1/2}f_\sigma(A^{-1/2}BA^{-1/2})A^{1/2}A^{-1/2}\right)\right]^{-1}\right)A^{-1/2}Y\right)=
\eec
\be\label{eq:der3}
\frac{1}{2}\tr(A^{-1}-B^{-1})Y.
\ee
Since the operator mean $\sigma$ is symmetric, that is 
$$
A\sigma B=B\sigma A=B^{1/2}f_\sigma\left(B^{-1/2}AB^{-1/2}\right)B^{1/2},
$$
a similar computation for the derivative 
$$
\left.\frac{\dd}{\dd t}\right|_{t=0}\frac{1}{2}\phi_\sigma(B,X+tY)$$ 
at $X=A\sigma B$ gives
\be\label{eq:der4}
\frac{1}{2}\tr(B^{-1}-A^{-1})Y
\ee
for all $Y\in\cB(\cH)^{sa}$. Using (\ref{eq:der3}) and (\ref{eq:der4}) we get for the derivative
\bec
\left.\left.\frac{\dd}{\dd t}\right|_{t=0}\left\lbrace\frac{1}{2}\phi_\sigma(A,X+tY)+\frac{1}{2}\phi_\sigma(B,X+tY)\right\rbrace\right|_{X=A\sigma B}
\eec
\bec
=\frac{1}{2}\tr(A^{-1}-B^{-1})Y+\frac{1}{2}\tr(B^{-1}-A^{-1})Y=0
\eec
for all $Y\in\cB(\cH)^{sa}$. So we obtained that $A \sigma B$ is a critical point and hence a unique minimizer of $X\mapsto \frac{1}{2}\left(\phi_\sigma (A,X)+\phi_\sigma(B,X)\right).$
\end{proof}

The above characterization of symmetric Kubo-Ando means as barycenters (Theorem \ref{thm:bary}) naturally leads to the idea of defining weighted and multivariate versions of Kubo-Ando means as minimizers of appropriate loss functions derived from the divergence $\phi_\sigma.$
Given a symmetric Kubo-Ando mean $\sigma,$ a finite set of positive definite operators $\bA=\lers{A_1, \dots, A_m} \subset \bh^{++},$ and a discrete probability distribution $\bw=\lers{w_1, \dots, w_m} \subset (0,1]$ with $\sum_{j=1}^m w_j=1$ we define the corresponding loss function $Q_{\sigma, \bA, \bw}: \, \bh^{++} \rightarrow [0, \infty]$ by
\be \label{eq:Q-def}
Q_{\sigma, \bA, \bw}(X):=\sum_{j=1}^m w_j \phi_\sigma \ler{A_j,X}
\ee
where $\phi_\sigma$ is defined by \eqref{eq:fi-sig}.
\par
However, in the weighted multivariate setting, when $\mathrm{ran}\ler{f_\sigma}$ is smaller than the whole positive half-line $(0, \infty),$ then some undesirable phenomena occur which are illustrated by the next example.
\par
Consider the arithmetic mean generated by $f_\nabla(x)=(1+x)/2$ with $\mathrm{ran}\ler{f_\nabla}=\ler{\fel, \infty}.$ Let $A_1, A_2 \in \bh^{++}$ satisfy $ A_1 < \frac{1}{3} A_2.$ In this case, for any $\alpha \in (0,1),$ the loss function $Q_{\nabla, \lers{A_1, A_2}, \lers{1-\alpha, \alpha}} (X)$ is finite only if $X>\fel A_2.$ So the barycenter of $A_1$ and $A_2$ with weights $\lers{1-\alpha,\alpha}$ is separated from $A_1$ for every $\alpha \in (0,1),$ even for values very close to $0.$
\par
To exclude such phenomena, from now on, we assume that the range of $f_\sigma$ is maximal, that is, $\mathrm{ran}\ler{f_\sigma}=(0, \infty),$ and hence
$g_\sigma(x)=\int_1^x\left(1-\frac{1}{f_\sigma^{-1}(t)}\right)\dd t$ is defined on the whole positive half-line $(0, \infty).$ Consequently, $\phi_\sigma$ is always finite, and hence so is $Q_{\sigma, \bA, \bw}$ on the whole positive definite cone $\bh^{++}.$

\begin{definition}\label{def:bary_alpha}
Let $\sigma: \, \bh^{++} \times \bh^{++} \rightarrow \bh^{++}$ be a symmetric Kubo-Ando operator mean such that $f_\sigma: \, (0, \infty) \rightarrow (0,\infty),$ which is the operator monotone function representing $\sigma$ in the sense of \eqref{eq:f-sig-rep}, is surjective. Let $g_\sigma$ be defined as in \eqref{eq:g}, and $\phi_\sigma$ be defined as in \eqref{eq:fi-sig}.
We call the optimizer
\be \label{eq:bary_alpha}
\mathbf{bc}\ler{\sigma,\bA, \bw}:=\argmin_{X\in\cB(\cH)^{++}}Q_{\sigma, \bA, \bw}
\ee
the weighted barycenter of the operators $\lers{A_1, \dots, A_m}$  with weights $\lers{w_1, \dots, w_m}$.
\end{definition}
By Theorem \ref{thm:bary}, this barycenter may be considered as a weighted multivariate version of Kubo-Ando means. To find the barycenter $\mathbf{bc}\ler{\sigma, \bA, \bw},$ we have to solve the critical point equation
\be \label{eq:crit-point}
\D Q_{\sigma, \bA, \bw}[X](\cdot)=0
\ee
for the strictly convex loss function $Q_{\sigma, \bA, \bw},$ where the symbol
$$
\D Q_{\sigma, \bA, \bw}[X](\cdot) \in \mathrm{Lin}\ler{\bh^{sa}, \R}
$$
stands for the Fr\'echet derivative of $Q_{\sigma, \bA, \bw}$ at the point $X \in \bh^{++}.$ For any $Y \in \bh^{sa}$ we have
$$
\D Q_{\sigma, \bA, \bw}[X](Y)
= \sum_{j=1}^m w_j \D \ler{\phi_\sigma \ler{A_j,\cdot}}[X](Y)
$$
$$
= \sum_{j=1}^m w_j \D \ler{\tr \ler{g_\sigma\ler{A_j^{-\fel} \cdot A_j^{-\fel}}}}[X](Y)
$$
$$
= \sum_{j=1}^m w_j \tr \ler{g_\sigma'\ler{A_j^{-\fel} X A_j^{-\fel}}A_j^{-\fel} Y A_j^{-\fel}}
$$
$$
= \tr \ler{\ler{\sum_{j=1}^m w_j A_j^{-\fel} g_\sigma'\ler{A_j^{-\fel} X A_j^{-\fel}}A_j^{-\fel}} Y }
$$
that is, the equation to be solved is
\be \label{eq:crit-point-2}
\sum_{j=1}^m w_j A_j^{-\fel} g_\sigma'\ler{A_j^{-\fel} X A_j^{-\fel}}A_j^{-\fel}=0 .
\ee
By the definition of $g_\sigma,$ see \eqref{eq:g}, $g_\sigma'(t)=1-\frac{1}{f_\sigma^{-1}(t)}$ for $t \in (0, \infty),$ and hence the critical point of the loss function $Q_{\sigma, \bA, \bw}$ is described by the equation
\be \label{eq:crit-point-3}
\sum_{j=1}^m w_j A_j^{-\fel} \ler{I-\ler{f_\sigma^{-1}\ler{A_j^{-\fel} X A_j^{-\fel}}}^{-1}}A_j^{-\fel}=0.
\ee
For $\sigma=\#$ the generating function is $f_{\#}(x)=\sqrt{x},$ and hence the inverse is $f_{\#}^{-1}(t)=t^2.$ In this case, the critical point equation \eqref{eq:crit-point-3} describing the barycenter $\mathbf{bc}\ler{\#, \bA, \bw}$ reads as follows:
\be \label{eq:geom-bary}
\sum_{j=1}^m w_j \ler{A_j^{-1}-X^{-1} A_j X^{-1}}=0.
\ee
Note that \eqref{eq:geom-bary} may be considered as a \emph{generalized Riccati equation}, and in the special case $m=2, w_1=w_2=\fel,$ the solution of \eqref{eq:geom-bary} is the symmetric geometric mean $A_1 \# A_2.$
\par
More generally, if $m=2, w_1=1-\alpha,$ and $w_2=\alpha,$ then \eqref{eq:geom-bary} has the following form:
$$
(1-\alpha)A_1^{-1}+\alpha A_2^{-1}=(1-\alpha)X^{-1} A_1 X^{-1}+\alpha X^{-1} A_2 X^{-1},
$$
or equivalently
\be\label{eq:Riccati_uj-0}
X\left[(1-\alpha)A_1^{-1}+\alpha A_2^{-1}\right]X=(1-\alpha)A_1+\alpha A_2.
\ee
Recall that for positive definite $A$ and $B$, the Riccati equation 
$$
XA^{-1}X=B
$$
has a unique positive definite solution, that is the geometric mean 
$$
A\#B=A^{1/2}(A^{-1/2}BA^{-1/2})^{1/2}A^{1/2}.
$$
We can observe that \eqref{eq:Riccati_uj-0} is the Riccati equation for the weighted harmonic mean
$$
A_1 !_{\alpha} A_2=[(1-\alpha)A_1^{-1}+\alpha A_2^{-1}]^{-1}
$$
and the weighted arithmetic mean $A_1\nabla_{\alpha}A_2=(1-\alpha)A_1+\alpha A_2$, ie
\be\label{eq:Ric-0}
X(A_1 !_{\alpha} A_2)^{-1}X=A_1\nabla_{\alpha}A_2.
\ee
Hence the solution of \eqref{eq:Riccati_uj-0} is the geometric mean of the weighted harmonic and the weighted arithmetic mean
\be\label{eq:wgeom_uj-0}
X=(A_1 !_{\alpha} A_2)\#(A_1\nabla_{\alpha}A_2).
\ee
It means that in this case the weighted barycenter with respect to $\phi_{\#}$ does not coincide with the weighted geometric mean, nevertheless
$$
\mathbf{bc}\ler{\#, \lers{A_1, A_2}, \lers{1-\alpha, \alpha}}=(A_1 !_{\alpha} A_2)\#(A_1 \nabla_{\alpha} A_2)
$$
that is, $\mathbf{bc}\ler{\#, \lers{A_1, A_2}, \lers{1-\alpha, \alpha}}$ is the Kubo-Ando mean of $A_1$ and $A_2$ with representing function 
$$
f_{\ler{!_\alpha \# \nabla_\alpha}}(x)=
\sqrt{\frac{x(1-\alpha +\alpha x)}{(1-\alpha)x+\alpha}}.
$$
These means were widely investigated in \cite{kim-lawson-lim}.
\par
We note that the critical point equation \eqref{eq:geom-bary} can be rearranged as
\be \label{eq:geom-bary-v2}
X \ler{\sum_{j=1}^m w_j A_j^{-1}} X= \sum_{j=1}^m w_j A_j.
\ee
This is the Ricatti equation for the weighted multivariate harmonic mean $\ler{\sum_{j=1}^m w_j A_j^{-1}}^{-1}$ and arithmetic mean $\sum_{j=1}^m w_j A_j,$ hence the barycenter $\mathbf{bc}\ler{\#, \bA, \bw}$ coincides with the weighted $\cA \# \cH$-mean of Kim, Lawson, and Lim \cite{kim-lawson-lim}, that is,
\be \label{eq:bary-ah}
\mathbf{bc}\ler{\#, \bA, \bw}=\ler{\sum_{j=1}^m w_j A_j^{-1}}^{-1} \# \ler{\sum_{j=1}^m w_j A_j}.
\ee

\paragraph*{{\bf Acknowledgement}} I am grateful to the anonymous reviewer for his/her valuable comments and recommendations.

\paragraph*{{\bf Conflict of interest statement}} On behalf of all authors, the corresponding author states that there is no conflict of interest.

\begin{small}
\bibliographystyle{plainurl}  
\bibliography{surv-refs}
\end{small}

\end{document}